\documentclass[11pt]{amsart}
\usepackage{amssymb,amsmath,amsthm,mathrsfs,multirow,enumerate}

\usepackage{graphicx} 
\usepackage[all]{xy}

\setlength\textwidth{6.4in} 
\setlength\textheight{8.4in}
\setlength\oddsidemargin{0in} 
\setlength\evensidemargin{0in}
\setlength\topmargin{0in}

\numberwithin{equation}{section} 

\theoremstyle{plain}
\newtheorem{thm}{Theorem}[section]

\newtheorem{lemma}[thm]{Lemma}
\newtheorem{prop}[thm]{Proposition}

\newtheorem*{nota}{Notation}

\theoremstyle{definition}

\newtheorem{ex}{Example}

\newcommand{\diag}{\mathop{\mathrm{diag}}}

\newcommand{\bbm}{\begin{bmatrix}}
\newcommand{\ebm}{\end{bmatrix}}

\begin{document}
\title{Cogredient Standard Forms of Orthogonal Matrices over Finite Local Rings of Odd Characteristic} 

\author{Yotsanan Meemark and Songpon Sriwongsa}

\address{Yotsanan Meemark\\ Department of Mathematics and Computer Science \\ Faculty of Science
\\ Chulalongkorn University\\ Bangkok, 10330 THAILAND}
\email{\tt yotsanan.m@chula.ac.th}

\address{Songpon Sriwongsa\\ Department of Mathematics and Computer Science\\ Faculty of Science
\\ Chulalongkorn University\\ Bangkok, 10330 THAILAND}
\email{\tt songpon\_sriwongsa@hotmail.com}


\keywords{Cogredient, Local rings, Orthogonal spaces.}

\subjclass[2000]{Primary: 05C25; Secondary: 05C60}

\begin{abstract}
In this work, we present a cogredient standard form of an orthogonal space over a finite local ring of odd characteristic.
\end{abstract}

\maketitle

\section{Units and the square mapping}
A {\it local ring} is a commutative ring which has a unique maximal ideal. For a local ring $R$,  we denote its unit group by $R^\times$ and it follows from Proposition 1.2.11 of \cite{B02} its unique maximal ideal $M = R \setminus R^\times$ consists of all non-unit elements. We also call the field $R/M$, the {\it residue field of }$R$. 

\begin{ex}
	If $p$ is a prime, then $\mathbb{Z}_{p^n}$, $n \in \mathbb{N}$, is a local ring with maximal ideal $p\mathbb{Z}_{p^n}$ and residue field $\mathbb{Z}_{p^n}/p\mathbb{Z}_{p^n}$ isomorphic to $\mathbb{Z}_p$. Moreover, every field is a local ring with maximal ideal $\{ 0\}$.
\end{ex}

Recall a common theorem about local rings that:

\begin{thm}
	Let $R$ be a local ring with unique maximal ideal $M$. Then $1 + m$ is a unit of $R$ for all $m \in M$. Furthermore, $u + m$ a unit in $R$ for all $m \in M$ and $u \in R^{\times}$.
	
\end{thm}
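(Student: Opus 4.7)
The plan is to prove the first assertion by contradiction, using the characterization of $M$ as the set of non-units, and then to derive the second assertion as a quick corollary by factoring out the unit.

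First, I would suppose toward a contradiction that $1+m$ is not a unit for some $m \in M$. By the description of $M$ recalled just before the theorem (that $M = R \setminus R^\times$), it would follow that $1+m \in M$. Since $M$ is an ideal and $m \in M$, the element $-m$ is also in $M$, so $(1+m) + (-m) = 1$ would lie in $M$. But $1 \in M$ forces $M = R$, contradicting the fact that $M$ is a \emph{maximal} (in particular proper) ideal. Hence $1+m$ must be a unit.

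For the second assertion, the key observation is the factorization
\[
u + m = u\bigl(1 + u^{-1}m\bigr),
\]
valid because $u \in R^\times$. Since $M$ is an ideal of $R$ and $u^{-1} \in R$, we have $u^{-1}m \in M$, so the first part of the theorem applies to give $1 + u^{-1}m \in R^\times$. The product of two units is a unit, whence $u+m \in R^\times$.

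I do not expect a real obstacle here: the only subtlety is being clear about why $1 \notin M$, which is simply the statement that the unique maximal ideal of a local ring is proper. Both parts are essentially one-line arguments once the correct factorization and the ideal property of $M$ are invoked.
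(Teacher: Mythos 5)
Your proof is correct and follows essentially the same route as the paper's: deduce $1+m \in M$ from the locality of $R$, derive the contradiction $1 \in M$, and reduce the second claim to the first via the factorization $u+m = u(1+u^{-1}m)$. You merely spell out the intermediate step (that $1 = (1+m)+(-m) \in M$ because $M$ is an ideal) that the paper leaves implicit.
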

\begin{proof}
	Suppose that $1+m$ is not a unit. Since $R$ is local, $1+m \in M$. Hence, $1$ must be in $M$, which is a contradiction. Finally, we note that $u+m = u(1+u^{-1}m)$ is a unit in $R$.
\end{proof}

Let $R$ be a finite local ring of odd characteristic with unique maximal ideal $M$ and residue field~$k$. Then $R$ is of order an odd prime power, and so is $M$. From Theorem XVIII. 2 of \cite{M74} we have that the unit group of $R$, denoted by $R^\times$, is isomorphic to $(1 + M)\times k^{\times}$. Consider the exact sequence of groups 
\[
1 \longrightarrow K_R \longrightarrow R^\times \longrightarrow (R^\times)^2 \longrightarrow 1
\]
where $\theta : a \longmapsto a^2$ is the square mapping on $R^\times$ with kernel $K_R = \{a \in R^\times: a^2 = 1 \}$ and $(R^\times)^2 = \{a^2 : a \in R^\times \}$. Note that $K_R$ consists of the identity and all elements of order two in $R^\times$. Since $R$ is of odd characteristic and $k^\times$ is cyclic, $K_R = \{\pm 1 \}$. Hence, $[ R^\times : (R^\times)^2 ] = |K_R| = 2 $.

\begin{prop}\label{indexUR}
	Let $R$ be a finite local ring of odd characteristic with unique maximal ideal $M$ and residue field~$k$.
	\begin{enumerate}[(1)]
		\item  
		The image $(R^\times)^2$ is a subgroup of $R^\times$ with index $\left[ R^\times : (R^\times)^2 \right] = 2$.
		\item 
		For $z \in R^\times \setminus (R^\times)^2$, we have $R^\times \setminus (R^\times)^2 = z(R^\times)^2$ and $|(R^\times)^2| = |z(R^\times)^2| = (1/2)|R^\times|$.
		\item
		For $u \in R^\times$ and $a \in M$, there exists $c \in R^\times$ such that $c^2 (u + a) = u$.
		\item
		If $-1 \notin (R^\times)^2$ and $u \in R^\times$, then $1 + u^2 \in R^\times$.
		\item If $-1 \notin (R^\times)^2$ and $z \in R^\times \setminus (R^\times)^2$, then there exist $x, y \in R^\times$ such that $z = (1+x^2)y^2$.
	\end{enumerate}
	\begin{proof}
		We have proved (1) in the above discussion and (2) follows from (1). Let $u \in R^\times$ and $a \in M$. Then $u^{-1}(u+a) = 1 + u^{-1}a \in 1 + M$, so $(u^{-1}(u + a))^{| 1 + M | + 1} = u^{-1}(u + a)$. Since $|1+M| = |M|$ is odd, $u^{-1}(u+a) = (c^{-1})^2$ for some $c \in R^\times$. Thus, $c^2(u+a) = u$ which proves (3).
		
		For (4), assume that $-1 \notin (R^\times)^2$ and let $u \in R^\times$. Suppose that $1+u^2 = x \in M$. Then $u^2 = -(1-x)$. Since $|M|$ is odd and $1-x \in 1+M$, $(u^{|M|})^2 = (-(1-x))^{|M|} = (-1)^{|M|}(1-x)^{|M|} = (-1)(1) = -1$, which contradicts $-1$ is non-square. Hence, $1+u^2 \in R^\times$.
		
		Finally, we observe that $|1 + (R^\times)^2| = |(R^\times)^2|$ is finite. If $1 + (R^\times)^2 \subseteq  (R^\times)^2$, then they must be equal, so there exists $b \in (R^\times)^2$ such that $1 + b = 1$, which forces $b = 0$, a contradiction. Hence, there exists an $x \in R^\times$ such that $1 + x^2 \notin (R^\times)^2$. By (4), $1 + x^2 \in R^\times$. Therefore, for a non-square unit $z$, we have $R^\times$ is a disjoint union of cosets $(R^\times)^2$ and $z(R^\times)^2$, so $1 + x^2 = z(y^{-1})^2$ for some $y \in R^\times$ as desired.
	\end{proof}
\end{prop}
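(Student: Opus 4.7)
Parts (1) and (2) are essentially immediate corollaries of the exact sequence and computation preceding the proposition: that discussion already shows $[R^\times : (R^\times)^2] = |K_R| = 2$, whence any $z \in R^\times \setminus (R^\times)^2$ represents the unique nontrivial coset and the two cosets partition $R^\times$ into halves of equal size. For (3), the plan is to reduce to showing that every element of $1+M$ is a square. Writing $u+a = u(1+u^{-1}a)$, which is a unit by the preceding theorem, and noting $1+u^{-1}a \in 1+M$, it suffices to produce $d \in 1+M$ with $d^2 = 1+u^{-1}a$ and set $c = d^{-1}$. Such a $d$ exists because $R$ has odd characteristic, so $|M|$ and hence $|1+M|$ are odd, and squaring on a finite abelian group of odd order is a bijection.

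For (4), I would argue by contradiction, leveraging the parity of $|M|$. Assume $-1 \notin (R^\times)^2$ but $1+u^2 = x \in M$ for some unit $u$; then $u^2 = -(1-x)$ with $1-x \in 1+M$. Raising to the $|M|$-th power annihilates the $(1-x)$ factor because $1+M$ has order $|M|$, while $(-1)^{|M|} = -1$ since $|M|$ is odd. The resulting identity $(u^{|M|})^2 = -1$ contradicts the non-squareness of $-1$. I expect this to be the subtlest step, since it is the only place where the actual ring arithmetic (rather than cardinality or pure group theory) does the real work.

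For (5), the approach is a counting argument on $1+(R^\times)^2$. By (4) this set is contained in $R^\times$, and translation being injective gives $|1+(R^\times)^2| = |(R^\times)^2| = |R^\times|/2$. If $1+(R^\times)^2 \subseteq (R^\times)^2$, equality of cardinalities forces the two sets to coincide; then $1 \in (R^\times)^2 = 1+(R^\times)^2$ would yield $1 = 1+b$ for some $b \in (R^\times)^2$, hence $b = 0$, contradicting $(R^\times)^2 \subseteq R^\times$. Thus some $1+x^2$ with $x \in R^\times$ lies in $R^\times \setminus (R^\times)^2 = z(R^\times)^2$; writing $1+x^2 = zw^2$ and taking $y = w^{-1}$ produces $z = (1+x^2)y^2$, as desired.
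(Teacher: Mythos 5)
Your proposal is correct and follows essentially the same route as the paper's own proof in every part: (1)--(2) from the preceding index computation, (3) via the odd order of $1+M$ making squaring bijective there, (4) by the same contradiction raising $u^2=-(1-x)$ to the $|M|$-th power, and (5) by the same counting/translation argument on $1+(R^\times)^2$. The only cosmetic difference is that in (3) you invoke bijectivity of squaring abstractly where the paper exhibits the square root explicitly as a power $(|1+M|+1)/2$; the content is identical.
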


In what follows, we shall apply the above proposition to obtain a nice cogredient standard form of an orthogonal space over a finite local ring of odd characteristic. This work generalizes the results over a Galois ring studied in \cite{C12}.

\section{Cogredient standard forms of orthogonal spaces}

Throughout this section, we let $R$ be a finite local ring of odd characteristic.

\begin{nota}
	For any $l \times n$ matrix $A$ and $q \times r$ matrix $B$ over $R$, we write 
	\[A \oplus B := 
	\begin{pmatrix}
	A & 0\\
	0 & B
	\end{pmatrix}\]
	which is an  $( l +q ) \times (n+r)$ matrix over $R$.
\end{nota}

For any matrices $S_1, S_2 \in M_n(R)$, if there exists an invertible matrix $P$ such that $PS_1P^T = S_2$, we say that $S_1$ is \textit{cogredient} to $S_2$ over $R$ and we write $S_1 \approx S_2$. Note that $S \approx c^2 S$ for all $c \in R^\times$. The next lemma is a key for our structure theorem. 

\begin{lemma}\label{zI}
	For a positive integer $\nu$ and $z \in R^\times \setminus (R^\times)^2, zI_{2\nu}$ is cogredient to $I_{2\nu}$.
	\begin{proof}
		If $-1 = u^2$ for some $u \in R^\times$, we may choose  $P = 2^{-1}
		\begin{pmatrix}
		(1+z) & u^{-1}(1-z) \\
		u(1-z) & (1+z)
		\end{pmatrix}$ whose determinant is $z \in R^\times$. Note that our $R$ of odd characteristic, so $2$ is a unit. Hence, $P$ is invertible and $PP^T = z I_2$. 
		Next, we assume that $-1$ is non-square. Then, by Proposition \ref{indexUR} (5), $z = (1 + x^2)y^2$ for some units $x$ and $y$ in $R^\times$. Choose  $Q = 
		\begin{pmatrix}
		xy & y \\
		-y & xy
		\end{pmatrix}$. Then $\det Q = (1 + x^2)y^2 = z \in R^\times$, so $Q$ is invertible and $QQ^T = 
		\begin{pmatrix}
		(1 + x^2)y^2 & 0\\
		0 & (1 + x^2)y^2
		\end{pmatrix} = zI_2$. 
		Therefore, $zI_{2\nu} = \overbrace{zI_2 \oplus \cdots \oplus zI_2}^{\nu^, s}$ is cogredient to $I_{2\nu} = \overbrace{I_2 \oplus \cdots \oplus I_2}^{\nu^,s}$.
	\end{proof}
\end{lemma}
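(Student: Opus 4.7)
The plan is to reduce to the $2\times 2$ case: once I have produced an invertible $P \in M_2(R)$ with $PP^T = zI_2$, then the block-diagonal matrix $P \oplus \cdots \oplus P$ ($\nu$ copies) is invertible and conjugates $I_{2\nu}$ to $zI_{2\nu}$, giving the desired cogredience. So the entire content of the lemma lies in constructing such a $P$ for $n=2$, and I would split this into two cases according to whether $-1$ is a square in $R^\times$.

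In the case $-1 = u^2$ for some $u \in R^\times$, my ansatz is to search for a matrix of the form $P = \tfrac{1}{2}\begin{pmatrix} \alpha & u^{-1}\beta \\ u\beta & \alpha \end{pmatrix}$, which is essentially multiplication by an element of the "Gaussian-integer" subring $R[u] \cong R$. Imposing $PP^T = zI_2$ leads naturally to $\alpha = 1+z$ and $\beta = 1-z$, so the candidate is
\[
P = \tfrac{1}{2}\begin{pmatrix} 1+z & u^{-1}(1-z) \\ u(1-z) & 1+z \end{pmatrix}.
\]
I would then verify by direct computation that $\det P = z \in R^\times$ (using $u^2=-1$ and the fact that $2 \in R^\times$ since $R$ has odd characteristic) and that $PP^T = zI_2$; invertibility of $P$ follows from invertibility of $z$ and $2$.

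In the case $-1 \notin (R^\times)^2$, my main tool is Proposition \ref{indexUR}(5), which writes $z = (1+x^2)y^2$ with $x,y \in R^\times$. This decomposition suggests the matrix
\[
Q = \begin{pmatrix} xy & y \\ -y & xy \end{pmatrix},
\]
whose determinant is $(1+x^2)y^2 = z \in R^\times$. A direct computation of $QQ^T$ yields $zI_2$, completing the $2\times 2$ step in this case.

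The main obstacle is really the case split and leveraging Proposition \ref{indexUR}(5) in the non-square $-1$ case; once the right matrix is written down, all verifications reduce to a handful of multiplications that use only $2 \in R^\times$ and the defining identities $u^2 = -1$ or $z = (1+x^2)y^2$. No subtleties about the maximal ideal $M$ intervene, because $x, y, z, u$ are already units and the proof is purely algebraic inside $R^\times$.
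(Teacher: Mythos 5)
Your proposal is correct and follows essentially the same route as the paper: the same matrix $P = \tfrac{1}{2}\begin{pmatrix} 1+z & u^{-1}(1-z) \\ u(1-z) & 1+z \end{pmatrix}$ when $-1=u^2$, the same matrix $Q = \begin{pmatrix} xy & y \\ -y & xy \end{pmatrix}$ via Proposition \ref{indexUR}(5) when $-1$ is non-square, and the same block-diagonal assembly to pass from $2\times 2$ to $2\nu \times 2\nu$. No gaps; the verifications you describe all check out.
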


Let $R$ be a local ring. Let $V$ be a free $R$-module of rank $n$, where $n \geq 2$. Assume that we have a function $\beta : V \times V \rightarrow R$ which is $R$-bilinear, symmetric and the $R$-module morphism from $V$ to $V*=\hom_R(V, R)$ given by $\vec{x} \mapsto \beta(\cdot, \vec{x})$ is an isomorphism. For $\vec{x} \in V$, we call $\beta(\vec{x}, \vec{x})$ the {\it norm of} $\vec{x}$. The pair $(V, \beta)$ is called an {\it orthogonal space}. Moreover, if $\beta = \{ \vec{b}_1,\ldots, \vec{b}_n \}$ is a basis of $V$, then the associated matrix $[\beta]_\mathcal{B} = [\beta(\vec{b}_i, \vec{b}_j)]_{n \times n}$. We say that $\mathcal{B}$ is an orthogonal basis if $\beta(\vec{b}_i, \vec{b}_i) = u_i \in R^{\times}$ for all $i$ and $\vec{b}_i, \vec{b}_j = 0$ for $i \neq j$. 

McDonald and Hershberger \cite{MH78} proved the following theorem.

\begin{thm}[Theorem 3.2 of \cite{MH78}]
	Let $(V,\beta)$ be an orthogonal space of rank $n \ge 2$. Then $(V, \beta)$ processes an orthogonal basis $\mathcal{C}$ so that $[\beta]_{\mathcal{C}}$ is a diagonal matrix whose entries on the diagonal are units.
\end{thm}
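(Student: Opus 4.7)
The plan is induction on $n$. The base case $n=1$ is trivial --- for a rank-one free module, non-degeneracy of $\beta$ forces the single Gram entry $\beta(\vec{b},\vec{b})$ to be a unit. So the real work is the inductive step, and within it the crux is to produce one vector $\vec{b}_1 \in V$ with $\beta(\vec{b}_1,\vec{b}_1) \in R^\times$. Once such a $\vec{b}_1$ is in hand, one splits off $R\vec{b}_1$ and applies the inductive hypothesis to its orthogonal complement.

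To produce $\vec{b}_1$, start from an arbitrary basis $\{\vec{e}_1,\ldots,\vec{e}_n\}$ of $V$. If some diagonal entry $\beta(\vec{e}_i,\vec{e}_i)$ is a unit, we are done. Otherwise every diagonal entry of the Gram matrix lies in $M$, and since $\det[\beta]$ is a unit its reduction modulo $M$ must still be nonzero, so there exist indices $i\neq j$ with $\beta(\vec{e}_i,\vec{e}_j) \in R^\times$. Because the characteristic is odd, $2 \in R^\times$, and $\vec{b}_1 := \vec{e}_i + \vec{e}_j$ has norm
\[
\beta(\vec{e}_i,\vec{e}_i) + 2\beta(\vec{e}_i,\vec{e}_j) + \beta(\vec{e}_j,\vec{e}_j),
\]
which is a unit plus two elements of $M$, and therefore a unit by Theorem~1.1.

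For the splitting, set $W = \{\vec{w} \in V : \beta(\vec{w},\vec{b}_1)=0\}$. Since $\beta(\vec{b}_1,\vec{b}_1)$ is a unit, every $\vec{x}\in V$ decomposes uniquely as $c\vec{b}_1 + \vec{w}$ with $c = \beta(\vec{x},\vec{b}_1)\cdot\beta(\vec{b}_1,\vec{b}_1)^{-1}$ and $\vec{w} \in W$; so $V = R\vec{b}_1 \oplus W$. Being a direct summand of the free module $V$ over a local ring, $W$ is itself free, of rank $n-1$. In any basis of $W$ the full Gram matrix becomes block diagonal, so $\det[\beta|_W]$ is the full determinant divided by the unit $\beta(\vec{b}_1,\vec{b}_1)$, again a unit; hence $(W,\beta|_W)$ is again an orthogonal space in the sense of the paper. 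The inductive hypothesis then supplies an orthogonal basis of $W$ with unit norms, and adjoining $\vec{b}_1$ completes the diagonalization.

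The main obstacle is the unit-norm step. Two features of $R$ are essential to it: the characteristic is odd, so $2$ is invertible and the mixing trick $\vec{e}_i+\vec{e}_j$ really does produce a unit norm; and the residue field is nontrivial, so non-degeneracy of $\beta$ descends to the Gram matrix modulo $M$ and lets us locate a unit among the off-diagonal entries. Everything after that --- the orthogonal direct sum, the freeness of $W$, and the non-degeneracy of the restriction --- is formal.
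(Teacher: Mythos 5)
Your proof is correct. Note that the paper does not prove this statement at all --- it is quoted verbatim as Theorem 3.2 of \cite{MH78} --- so there is no in-paper argument to compare against; what you have written is the standard diagonalization argument for symmetric unimodular forms over a local ring in which $2$ is a unit, and all the steps check out: the reduction mod $M$ correctly locates a unit off-diagonal entry when every diagonal entry is a non-unit, the odd characteristic makes $2\beta(\vec{e}_i,\vec{e}_j)$ a unit so that $\vec{e}_i+\vec{e}_j$ has unit norm, the orthogonal splitting $V = R\vec{b}_1 \oplus W$ is forced by the invertibility of $\beta(\vec{b}_1,\vec{b}_1)$, and $W$ is free with unimodular restricted form because finitely generated projectives over a local ring are free and the Gram determinant factors across the block decomposition. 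The only cosmetic remark is that your induction bottoms out at rank $1$ while the theorem is stated for $n\ge 2$; that is harmless, and your argument in fact uses only that $R$ is local with $2\in R^\times$, not finiteness.
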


Let $(V,\beta)$ be an orthogonal space of rank $n \ge 2$. Let $\mathcal{C}$ be an orthogonal basis of $V$ such that $[\beta]_{\mathcal{C}}$ is a diagonal matrix whose entries on the diagonal are units.
From $[\beta]_\mathcal{C}= \diag (u_1, \ldots, u_n)$ and $u_i$ are units for all $i$. Assume that $u_1, \ldots, u_r$ are squares and $u_{r+1}, \ldots, u_n$ are non-squares. Since $R^\times$ is a disjoint union of the cosets $(R^\times)^2$ and $z(R^\times)^2$ for some non-square unit $z$, we have $u_i = w_i^2$ for some $w_i \in R^\times$, $i = 1, \ldots, r$ and  $u_j = zw_j^2$ for some $w_j \in R^\times$, $j = r+1, \ldots, n$. Thus, $[\beta]_\mathcal{C}= \diag (u_1, \ldots, u_r) \oplus z \diag (w_{r+1}, \ldots, w_n)$ which is cogredient to $I_r \oplus zI_{n-r}$. If $n-r$ is even, Lemma \ref{zI} implies that $[\beta]_\mathcal{C}$ is cogredient to $I_n$. If $n - r$ is odd, then $n - r - 1$ is even and so $[\beta]_\mathcal{C}$ is cogredient to $I_{n-1} \oplus (z)$ by the same lemma. Note that $I_n$ and $I_{n - 1} \oplus (z)$ are not cogredient since $z$ is non-square. We record this result in the next theorem.

\begin{thm}\label{2type}
	Let $z$ be a non-square unit in $R$. Then $[\beta]_\mathcal{C}$ is cogredient to either $I_n$ or $I_{n-1} \oplus (z)$. 
\end{thm}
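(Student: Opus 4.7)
The plan is to reduce $[\beta]_{\cc}$ to a diagonal matrix of units via the McDonald--Hershberger theorem, then normalize each diagonal entry to either $1$ or $z$ through the coset decomposition $R^\times = (R^\times)^2 \sqcup z(R^\times)^2$ from Proposition \ref{indexUR}(2), and finally absorb pairs of $z$'s into pairs of $1$'s using Lemma \ref{zI}. A short determinant calculation should then confirm that the two resulting normal forms really are distinct up to cogredience.

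First I would invoke the McDonald--Hershberger theorem to get an orthogonal basis $\cc$ with $[\beta]_{\cc} = \diag(u_1,\dots,u_n)$ and each $u_i\in R^\times$. After reordering the basis I can assume that $u_1,\dots,u_r$ are squares, say $u_i=w_i^2$, while $u_{r+1},\dots,u_n$ are non-squares, so by Proposition \ref{indexUR}(2) each such $u_j$ has the form $zw_j^2$ for some $w_j\in R^\times$. Conjugating the diagonal matrix by $P=\diag(w_1^{-1},\dots,w_n^{-1})$ will then produce the cleaner form $[\beta]_{\cc} \approx I_r \oplus zI_{n-r}$, which reduces the whole question to the parity of $n-r$.

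Next I would split into the two parity cases. If $n-r$ is even, Lemma \ref{zI} gives $zI_{n-r}\approx I_{n-r}$ immediately, so $[\beta]_{\cc}\approx I_n$. If $n-r$ is odd, I would peel off a single entry and write $zI_{n-r}=zI_{n-r-1}\oplus(z)$; now $n-r-1$ is even, so Lemma \ref{zI} collapses the first block and leaves $[\beta]_{\cc}\approx I_{n-1}\oplus(z)$.

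Finally, to see that $I_n$ and $I_{n-1}\oplus(z)$ are genuinely not cogredient, I would observe that any invertible $P$ realizing $PP^T=I_{n-1}\oplus(z)$ must satisfy $(\det P)^2=z$, with $\det P\in R^\times$, which contradicts $z\notin(R^\times)^2$. I expect essentially no obstacle beyond Lemma \ref{zI} itself; once the identity $zI_2\approx I_2$ is in hand the square/non-square bookkeeping and the parity dichotomy are routine, and the determinant argument for distinctness is immediate.
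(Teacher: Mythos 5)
Your proposal is correct and follows essentially the same route as the paper: diagonalize via McDonald--Hershberger, sort the diagonal units into the cosets $(R^\times)^2$ and $z(R^\times)^2$ to reach $I_r\oplus zI_{n-r}$, then apply Lemma \ref{zI} according to the parity of $n-r$. Your explicit determinant argument for the inequivalence of $I_n$ and $I_{n-1}\oplus(z)$ is exactly the justification the paper leaves implicit.
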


The next lemma follows by a simple calculation.

\begin{lemma}\label{cH}
	Let $z$ be a non-square unit in $R$ and  and $\nu$ a positive integer. Write $H_{2\nu}=
	\begin{pmatrix}
	0 & I_\nu \\
	I_\nu & 0
	\end{pmatrix}$.
	\begin{enumerate}[(1)]
		\item 
		If $-1 \in (R^\times)^2$, then $I_\nu$ is cogredient to $H_{2\nu}$ and $
		\begin{pmatrix}
		1 & 0 \\
		0 & z
		\end{pmatrix}
		\approx
		\begin{pmatrix}
		1 & 0 \\
		0 & -z
		\end{pmatrix}.
		$
		\item
		If $-1 \notin (R^\times)^2$, then $I_\nu \oplus zI_\nu$ is cogredient to $H_{2\nu}$ and $I_2 \approx 
		\begin{pmatrix}
		1 & 0 \\
		0 & -z
		\end{pmatrix}.
		$
	\end{enumerate}
	\begin{proof} 
		First we observe that if $-1 = u^2$ for some unit $u$, then 
		\[
		\begin{pmatrix}
		1 & 0 \\
		0 & -z
		\end{pmatrix} = \begin{pmatrix}
		1 & 0 \\
		0 & u
		\end{pmatrix}\begin{pmatrix}
		1 & 0 \\
		0 & z
		\end{pmatrix}\begin{pmatrix}
		1 & 0 \\
		0 & u
		\end{pmatrix}.
		\]
		However, if $-1$ is non-square, then $-1 = zc^2$ for some unit $c \in R$ and
		\[
		\begin{pmatrix}
		1 & 0 \\
		0 & c
		\end{pmatrix}\begin{pmatrix}
		1 & 0 \\
		0 & -z
		\end{pmatrix}\begin{pmatrix}
		1 & 0 \\
		0 & c
		\end{pmatrix}
		= \begin{pmatrix}
		1 & 0 \\
		0 & -zc^2
		\end{pmatrix} = I_2.
		\]
		Next, a simple calculation with $P = \frac{1}{2}
		\begin{pmatrix}
		I_\nu & -I_\nu \\
		I_\nu & I_\nu
		\end{pmatrix}$ shows that $L = 2
		\begin{pmatrix}
		I_\nu & 0 \\
		0 & -I_\nu
		\end{pmatrix}$ is cogredient to $H_{2\nu}$. Clearly, if $-1$ is square, $L$ is cogredient to $I_{2\nu}$. Assume that $-1$ is non-square. By Proposition \ref{indexUR} (2), $-1 = zc^2$ for some unit $c$ which also implies that $2$ or $-2$ must be a square unit. If $2$ is a square unit, then 
		\[
		L \approx I_\nu \oplus (-I_\nu) \approx I_\nu \oplus zc^2I_\nu \approx I_\nu \oplus zI_\nu.
		\]
		Similarly, if $-2$ is a square unit, then 
		\[
		L \approx (-I_\nu) \oplus I_\nu \approx zc^2I_\nu \oplus I_\nu \approx I_\nu \oplus zI_\nu.
		\]
		Therefore, $I_\nu \oplus zI_\nu$ is cogredient to $H_{2\nu}$. 
	\end{proof}
\end{lemma}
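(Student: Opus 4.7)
My plan is to separate the lemma into three independent ingredients: the two scalar $2\times 2$ equivalences, an explicit conjugation of a ``signed identity'' to $H_{2\nu}$, and a final identification step relating the signed identity to either $I_{2\nu}$ or $I_\nu \oplus zI_\nu$.

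For the scalar equivalences I would simply write down diagonal conjugators. When $-1 = u^2$ with $u \in R^\times$, conjugating $\diag(1,z)$ by $\diag(1,u)$ yields $\diag(1,u^2 z) = \diag(1,-z)$. When $-1$ is non-square, Proposition \ref{indexUR}(2) gives $-1 = zc^2$ for some $c \in R^\times$, and conjugating $\diag(1,-z)$ by $\diag(1,c)$ yields $\diag(1,-zc^2) = I_2$. These are essentially one-line verifications.

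The heart of the lemma is producing an invertible $P$ with $PLP^T = H_{2\nu}$ for the signed identity $L := 2\bigl(I_\nu \oplus (-I_\nu)\bigr)$. The natural guess is a block ``rotation'' of the form $P = \tfrac{1}{2}\!\begin{pmatrix} I_\nu & -I_\nu \\ I_\nu & I_\nu \end{pmatrix}$, which is invertible because its determinant is a power of the unit $2 \in R^\times$ (recall $R$ has odd characteristic); a direct block computation then confirms $PLP^T = H_{2\nu}$ unconditionally on the square class of $-1$. With $L \approx H_{2\nu}$ in hand, it remains to identify $L$ with the stated standard form. The basic tool is the scalar identity $(sI_k)M(sI_k)^T = s^2 M$, which lets me absorb any square unit. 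If $-1 = u^2$, then $-I_\nu \approx I_\nu$ via $uI_\nu$, so $L \approx 2 I_{2\nu}$; the factor $2$ disappears either by direct scalar conjugation (if $2$ is a square) or by Lemma \ref{zI} with $z = 2$ (if $2$ is non-square). If $-1 = zc^2$ is non-square, the same trick gives $-I_\nu \approx zI_\nu$, so $L \approx 2(I_\nu \oplus zI_\nu)$; to strip off the $2$, I observe that $(R^\times)/(R^\times)^2$ has order $2$ and $-1 \notin (R^\times)^2$, so exactly one of $2$ and $-2 = (-1)\cdot 2$ is a square, and in each subcase I can absorb the scalar (possibly after swapping the two diagonal blocks, since $\diag(I_\nu,zI_\nu)$ and $\diag(zI_\nu,I_\nu)$ are cogredient via a permutation).

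The only real obstacle I foresee is guessing the conjugator $P$; once it is in hand the verification is routine, and the remaining manipulations are just bookkeeping in the square-class group using Proposition \ref{indexUR} and Lemma \ref{zI}.
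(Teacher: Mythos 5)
Your proposal is correct and follows essentially the same route as the paper: the same diagonal conjugators $\diag(1,u)$ and $\diag(1,c)$ for the $2\times 2$ statements, the same block matrix $P=\tfrac12\left(\begin{smallmatrix} I_\nu & -I_\nu\\ I_\nu & I_\nu\end{smallmatrix}\right)$ sending $L=2(I_\nu\oplus(-I_\nu))$ to $H_{2\nu}$, and the same case split on which of $2$, $-2$ is a square to absorb the scalar. Your explicit appeal to Lemma \ref{zI} with the non-square unit $2$ in the case $-1\in(R^\times)^2$ makes precise a step the paper dismisses as ``clearly.''
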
	

Next, we apply Lemmas \ref{zI} and \ref{cH} in the following calculations. We distinguish three cases. Let $z$ be a non-square unit and $\nu$ a positive integer.
\begin{enumerate}
	\item Assume that $-1$ is square. Then
	
	\begin{enumerate}
		\item $I_{2\nu} \approx H_{2\nu}$ and $I_{2\nu + 1} \approx H_{2\nu} \oplus (1)$.
		\item $I_{2\nu} \oplus (z) \approx H_{2\nu} \oplus (z)$ and 
		$ 
		I _{2(\nu - 1)} \oplus (z) \approx I_{2(\nu - 1)} \oplus
		\begin{pmatrix}
		1 & 0 \\
		0 & z
		\end{pmatrix}
		\approx
		H_{2\nu - 1} \oplus 
		\begin{pmatrix}
		1 & 0 \\
		0 & -z
		\end{pmatrix}.
		$
	\end{enumerate}
	\item Assume that $-1$ is non-square and $\nu$ is even. Then
	
	\begin{enumerate}
		\item $I_{2\nu} \approx I_\nu \oplus I_\nu \approx I_\nu \oplus zI_\nu \approx H_{2\nu}$ and 
		$I_{2\nu + 1} \approx I_\nu \oplus I_\nu \oplus (1) \approx I_\nu \oplus zI_\nu \oplus (1) \approx H_{2\nu} \oplus (1)$.
		\item $I_{2\nu} \oplus (z) \approx I_\nu \oplus I_\nu \oplus (z) \approx I_\nu \oplus zI_\nu \oplus (z) \oplus H_{2\nu} \oplus (z)$ and
		\begin{align*}
			I_{2\nu - 1} \oplus (z) 
			&\approx I_{\nu - 2} \oplus I_{\nu - 2} \oplus I_3 \oplus (z)
			\approx I_{\nu - 2} \oplus zI_{\nu - 2} \oplus I_3 \oplus (z)\\
			&\approx I_{\nu - 1} \oplus zI_{\nu - 1} \oplus I_2 
			\approx H_{2(\nu - 1)} \oplus 
			\begin{pmatrix}
				1 & 0 \\
				0 & -z
			\end{pmatrix}.
		\end{align*}
	\end{enumerate}
	\item Assume that $-1$ is non-square and $\nu$ is odd. Then
	
	\begin{enumerate}
		\item  $I_{2\nu} \approx I_{\nu - 1} \oplus I_{\nu - 1} \oplus I_2 \approx I_{\nu - 1} \oplus zI_{\nu - 1} \oplus I_2 \approx H_{2(\nu - 1)} \oplus 
		\begin{pmatrix}
		1 & 0 \\
		0 & -z
		\end{pmatrix}$ and \\
		$I_{2\nu + 1} \approx I_{\nu - 1} \oplus I_{\nu - 1} \oplus I_2 \oplus (1) \approx I_{\nu - 1} \oplus zI_{\nu - 1} \oplus zI_2 \oplus (1) \oplus I_\nu \oplus zI_\nu \oplus (z) \approx H_{2\nu} \oplus (z)$.
		\item $I_{2\nu} \oplus (z) \approx I_{\nu - 1} \oplus I_{\nu - 1} \oplus I_2 \oplus (z) \approx I_{\nu -1} \oplus zI_{\nu - 1} \oplus I_2 \oplus (z) \approx I_\nu \oplus zI_\nu \oplus (1) \approx H_{2\nu} \oplus (1)$ and 
		$I_{2\nu - 1} \oplus (z) \approx I_{\nu - 1} \oplus I_{\nu - 1} \oplus (1) \oplus (z) \approx I_{\nu - 1} \oplus I_{\nu - 1} \oplus (1) \oplus (z) \approx I_\nu \oplus zI_\nu \approx H_{2\nu}$.
	\end{enumerate}
	
\end{enumerate}
This proves a cogredient standard form of an orthogonal space over a finite local ring of odd characteristic.

\begin{thm}\label{cogredientmatrix}
	Let $R$ be a finite local ring of odd characteristic and let $(V, \beta)$ be an orthogonal space where $V$ is  a free $R$-module of rank $n \geq 2$. Then there exists a $\delta \in \{0,1,2\}$ such that $\nu = \dfrac{n-\delta}{2} \ge 1$ and the associating matrix of $\beta$ is cogredient to 
	\[
	S_{2\nu + \delta, \Delta} =
	\begin{pmatrix}
	0 & I_\nu &  \\
	I_\nu & 0 & \\
	&        & \Delta
	\end{pmatrix},
	\]
	where
	\[
	\Delta =
	\begin{cases}
	\emptyset (\text{disappear}) & \text{if} \ \delta = 0, \\
	(1) \ \text{or} \ (z)      & \text{if} \ \delta = 1, \\
	\diag (1, -z)          & \text{if} \ \delta = 2,
	\end{cases} 
	\]
	and $z$ is a fixed non-square unit of $R$. 
\end{thm}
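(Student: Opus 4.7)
The plan is to begin from Theorem \ref{2type}, which already reduces the problem to showing that $I_n$ and $I_{n-1} \oplus (z)$ are each cogredient to a matrix of the form $S_{2\nu + \delta, \Delta}$. The key mechanism for producing the hyperbolic block $H_{2\nu}$ is Lemma \ref{cH}: it delivers $H_{2\nu}$ from $I_{2\nu}$ when $-1$ is a square and from $I_\nu \oplus zI_\nu$ when $-1$ is not. So the whole proof is a matter of rewriting the diagonal form of $[\beta]_{\mathcal{C}}$ into one of those two shapes and collecting whatever is left over into a block $\Delta$ of size at most $2$, which then forces $\delta \in \{0,1,2\}$.

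First I would handle the case $-1 \in (R^\times)^2$. Here Lemma \ref{cH}(1) applies without preparation: $I_n$ becomes $H_n$ or $H_{n-1} \oplus (1)$ according to the parity of $n$, and $I_{n-1} \oplus (z)$ becomes $H_{n-1} \oplus (z)$ when $n$ is odd. When $n$ is even I would peel off a $\diag(1, z)$ block, convert it to $\diag(1, -z)$ using the second half of Lemma \ref{cH}(1), and obtain $H_{n-2} \oplus \diag(1, -z)$; this already matches one of the three $\Delta$ shapes.

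The case $-1 \notin (R^\times)^2$ is more delicate because Lemma \ref{cH}(2) requires a balanced pair $I_\nu \oplus zI_\nu$. To build such a pair, I would apply Lemma \ref{zI} to replace an $I_{2\mu}$-block by $zI_{2\mu}$ on a subblock of even size. When $\nu$ is even this works directly and the remainder of size $0$, $1$, or $2$ is absorbed into $\Delta$. When $\nu$ is odd the even-size constraint of Lemma \ref{zI} fails, so I would first peel off an $I_2$ summand (leaving an even-sized $I$-block to which Lemma \ref{zI} applies), and then use the identity $I_2 \approx \diag(1, -z)$ from Lemma \ref{cH}(2) to trade the peeled-off block with the remaining scalars in order to produce exactly $\emptyset$, $(1)$, $(z)$, or $\diag(1, -z)$ as $\Delta$.

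The main obstacle is not any individual step but the bookkeeping: there are three binary choices (starting form $I_n$ versus $I_{n-1} \oplus (z)$, parity of $n$, and parity of $\nu$ when $-1$ is non-square), so the non-square case fans out into several subcases, and I would organize them as the table of computations immediately preceding the theorem does. Each subcase collapses to one of the three prescribed shapes of $\Delta$, and the fact that $(1)$ and $(z)$ give genuinely distinct classes when $\delta = 1$ follows because $z$ is non-square, exactly as noted at the end of Theorem \ref{2type}.
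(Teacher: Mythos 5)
Your proposal is correct and follows essentially the same route as the paper: reduce to $I_n$ or $I_{n-1}\oplus(z)$ via Theorem \ref{2type}, then use Lemma \ref{zI} to rebalance blocks into the shapes $I_{2\nu}$ or $I_\nu\oplus zI_\nu$ that Lemma \ref{cH} converts to $H_{2\nu}$, splitting into the same cases on whether $-1$ is a square and on the relevant parities. The paper's argument is exactly the table of computations you reference, so there is nothing further to add.
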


\bigskip

\noindent{\bf Acknowledgments}  I would like to thank the Science Achievement Scholarship of Thailand (SAST) for financial support throughout my undergraduate and graduate study.


\end{document}